\newdimen\mainfontsize \mainfontsize=1\@ptsize pt
\renewcommand\@makefntext[1]{%
  \noindent\makebox[0.5em][r]{\@makefnmark}#1}
\def\blfootnote{\xdef\@thefnmark{}\@footnotetext}
\newtheorem{theorem}{Theorem}[section]
\newtheorem{corollary}{Corollary}[section]
\theoremstyle{remark}
\theoremstyle{definition}
\newtheorem{remark}{Remark}[section]
\newcommand{\E}{\mathbb E}
\newcommand{\R}{\mathbb R}
\newcommand{\F}{\mathcal F}
\newcommand{\ud}{\,\mathrm{d}}
\newcommand{\vd}{\mathrm{d}}
\newcommand{\lt}{\left}
\newcommand{\rt}{\right}
\newcommand{\pt}{\partial}
\def\P{{\mathbb P}}
\newcommand{\Ind}{\mathbbm{1}}
\newcommand{\supp}{\mathop{\mathrm{supp}}\nolimits}
\newcommand{\Pdom}{D}
\DeclareMathOperator*{\argmin}{arg\,min}
\DeclareMathOperator*{\argmax}{arg\,max}
\title{Monotonicity and robustness in Wiener disorder detection}       
\author{Erik Ekstr\"om\footnote{Department of Mathematics, Uppsala University, Box 480, 751 06 Uppsala, Sweden 
\mbox{(email: \href{mailto:ekstrom@math.uu.se}{\nolinkurl{ekstrom@math.uu.se}}.})} \and 
Juozas Vaicenavicius \footnote{Department of Information Technology, Uppsala University, Box 337, 751 05 Uppsala, Sweden \mbox{(email: \href{mailto: juozas.vaicenavicius@it.uu.se}{\nolinkurl{juozas.vaicenavicius@it.uu.se}})}}
 }
\date{ }   
\begin{document}

\maketitle
\begin{abstract}
We study the problem of detecting a drift change of a Brownian motion under various extensions of 
the classical case. Specifically, we consider the case of a random post-change drift and examine monotonicity properties of the solution with respect to different model parameters.
Moreover, robustness properties -- effects of misspecification of the underlying model -- are explored.
\end{abstract}

\smallskip
\smallskip
\smallskip
\noindent
\textit{MSC 2010 subject classification:} primary 60G35; secondary  62L10, 60G40.

\noindent
\textit{Keywords:} anomaly detection, sequential analysis, optimal stopping, robustness.

\section{Introduction} \label{s:Intro}

In the classical version of the quickest disorder detection (QDD) problem, see \cite{aS67}, one observes a one-dimensional process $Y$ which satisfies
\[Y_t=b(t-\Theta)^++\sigma W_t,\]
where $b$ and $\sigma$ are non-zero constants, $W$ is a standard Brownian motion and the disorder time $\Theta$
is an exponentially distributed random variable (with intensity $\lambda>0$) such that $W$ and $\Theta$ are independent. 
The associated Bayes' risk (expected cost) corresponding to a stopping rule $\tau$ is defined as
\begin{eqnarray}\label{E:Bayes}
\P(\Theta>\tau)+c\E[(\tau-\Theta)^+],
\end{eqnarray}
where $c>0$ is the cost of one unit of detection delay. It is well-known (see \cite[Chapter 4]{S78}) that to minimise the Bayes risk
one should stop the first time the conditional probability process $\Pi_t:=\P(\Theta\leq t\vert \mathcal F_t^Y)$ reaches a certain level $a$. 
Moreover, the level $a$ is characterized as the unique solution of a transcendental equation.

In many situations, however, it is natural {\em not} to know the exact value of the disorder magnitude $b$, 
but merely its {\em distribution}. This is the case for example when a specific machine is monitored continuously, and the
machine can break down in several possible ways.
To study such a situation, we allow for the new drift to be a random variable $B$ with distribution $\mu$ such that $B$ is independent 
of the other sources of randomness.
In this setting we study {\bf monotonicity properties} 
of the QDD problem, i.e. whether the (minimal) expected cost 
is monotone with respect to various model parameters. In particular, we study the dependence of the expected cost
on the volatility $\sigma$, the distribution $\mu$, and
the disorder intensity $\lambda$. 
We also study {\bf robustness} in the QDD problem, i.e.
what happens if one misspecifies various model parameters. More specifically, 
we aim at estimates for the increased cost associated with the use of suboptimal strategies. 
Clearly, such estimates are helpful in situations where the model is badly calibrated, but also in situations where one chooses to use a simpler suboptimal strategy rather than a computationally more demanding optimal strategy. 

As mentioned above, the classical version of the QDD problem was studied in \cite{aS67}, see also \cite[Chapter 4]{S78}
and \cite[Section 22]{PS06}; for extensions to the case of detecting a change in the intensity of a Poission process, 
see \cite{PS02}, \cite{BDK} and \cite{BDK06}.
For the case of a random disorder magnitude, \cite{B} obtains asymptotic results of a problem with normally distributed drift.
Concavity of the 
value function in a related hypothesis testing problem with two possible post-change drift values in a time-homogeneous case was obtained in \cite{MS}. Finally, practical significance of the disorder detection problem in modern 
engineering applications is explained in \cite{ZTP}.

\section{General model formulation}

\label{S:2}

We model a signal-processing activity on a stochastic basis $(\Omega, \F, \mathbb{F}, \P)$, where the filtration $\mathbb{F} = \{\F_{t}\}_{t\geq0 }$ satisfies the usual conditions.
We are interested in the signal process $X$, which is not directly observable, but we can continuously observe the noisy process
\begin{eqnarray}
Y_{t}= \int_{0}^{t} X_{u} \ud u +  \int_0^t\sigma(u) dW_{u}, \qquad t \geq 0.
\end{eqnarray}
Here $W$ is a Brownian motion independent of $X$, the dispersion $\sigma$ is deterministic and strictly positive, and the signal process follows
\begin{eqnarray}
X_{t}= B^0 \Ind_{\{\Theta =0 \}} +B^1 \Ind_{\{ 0 < \Theta \leq t \}},
\end{eqnarray}
where $\Theta$ is a $[0, \infty)$-valued random variable representing the disorder occurrence time. Moreover, $B^0, B^1$ are real-valued random variables corresponding to disorder magnitudes in the cases `disorder occurs before we start observing $Y$' and `disorder occurs while we observe $Y$', respectively. Also, $\Theta$, $B^0$, and $B^1$ are independent.   
Let $\Theta$ have the distribution $\tilde{\pi}\delta_0 + (1-\tilde{\pi})\nu$, were $\nu$ is a probability measure on $(0,\infty)$ with  a continuously differentiable distribution function $F_\nu$. In addition, denote the distributions of $B^0$ and $B^1$  by  $\mu^0$ and $\mu^1$, respectively. When referring to $\mu^0$ and $\mu^1$ collectively, we will simply say that the prior is $\mu$.  Let us introduce the notation
\[ \Pdom^{n} := \{ \pi \in [0, \infty)^{n}\,:\, \|\pi\|_{1} \leq 1\}\]
and
\[\Delta^n := \left\{\pi \in [0, \infty)^{n} \,:\, \|\pi\|_{1} = 1\right\},\]
where $\|\pi\|_{1}= \sum_{i=1}^n \pi_i$. We assume that
\begin{eqnarray*}
 \mu^0 = \sum_{i=1}^{n} \check{p}_{i} \delta_{b_{i}}, \quad \mu^1 = \sum_{i=1}^{n} p_{i} \delta_{b_{i}},
\end{eqnarray*}
where $b_{1}, \ldots, b_{n}\in \R\setminus\{0\}$ and $(\check{p}_{1}, \ldots, \check{p}_{n}),(p_{1}, \ldots, p_{n}) \in \Delta^n$.

The model studied in the paper is a generalisation of the classical disorder occurrence model \cite{aS67}. Firstly, the exponential disorder distribution used in the classical problem is replaced by an arbitrary distribution with time-dependent intensity. The generalisation is advantageous in situations when the intensity of the disorder occurrence changes with time. For example, if the disorder corresponds to a component failure in a system, for many physical systems, the failure intensity is known to increases with age. Also, if occurrence of the disorder depends on external factors such as weather, then such dependency can be incorporated into the time-dependent disorder intensity from an accurate weather forecast. Moreover, in contrast to the classical problem in which the disorder magnitude is known in advance, in this generalisation, the magnitude takes a value from a range of possible values. Returning to the component failure example, the different possible disorder magnitudes would represent different types of component failure. In the problem of detecting malfunctioning atomic clocks \cite{ZTP}, the disorder corresponds to a systematic drift of a clock. The sign of the disorder magnitude reflects whether a clock starts to go too slow or two fast while the absolute value represents the severity of the drift. In addition, the different distributions $\mu^0$, $\mu^1$ of $B^0$ and $B^1$ and the weight $\tilde \pi$ reflect the prior knowledge about how likely different disorder magnitudes are if the disorder happened before or while observing $Y$. For instance, such model flexibility is relevant when we start observing the system after a particular incident (e.g.~a storm if the system is affected by the weather) and we know that the distribution of possible disorder magnitudes after the incident is different than under normal operating conditions. From a mathematical point of view, $\tilde \pi$ and $B^0$ allows us to give a statistical interpretation to an arbitrary starting point in the Markovian embedding \eqref{E:v} of the original optimal stopping problem studied later. 

\begin{remark}
We point out that the finite support assumption on $\mu$ is made for notational convenience. As any distribution can be approximated arbitrarily well by finitely supported ones, obviously, our monotonicity results below can be extended to general disorder magnitude distributions.  
\end{remark}

We are interested in a disorder detection strategy $\tau$ incorporating two objectives: short detection delay and a small portion of false alarms. As noted in the introduction, a classical choice of Bayes' risk for a detection strategy to minimize is given by
\eqref{E:Bayes}. In the present paper, we consider a slightly more flexible risk structure by allowing a time-dependent cost
for the detection delay. More precisely, we consider the Bayes' risk
\begin{eqnarray*}
R(\tau) &:=& \E\left[ \Ind_{ \{\tau < \Theta \}} + \int_\Theta^\tau c(u) \ud u\right]
\end{eqnarray*} 
where $\Ind_{ \{\tau < \Theta \}}$ is a fixed penalty for a false alarm and the term $\int_\Theta^\tau c(u) \ud u$ is a penalty for detection delay. Here $t \mapsto c(t)$ is a deterministic function with $c(t)>0$ for all $t\geq 0$. Writing $\F^Y=\{\F^Y_t\}_{t\geq0}$ for the filtration 
generated by $Y$ (which is our observation filtration), let us introduce  $\tilde{\Pi}_{t} := \E[\Ind_{\R \setminus \{0\}}(X_{t})\,|\,\F_{t}^{Y} ]$. Then 
\begin{eqnarray}
R(\tau) &=&  \E \lt[1- \E[\Ind_{\{ \Theta \leq \tau \}} \, | \, \F_{\tau}^{Y}]  \rt] + \int_{0}^{\infty}  c(t)\E \lt[  \Ind_{\{ t \leq \tau \}} \E \lt[ \Ind_{\{\Theta \leq t \}} \, | \, \F^{Y}_{t} \rt] \rt] \ud t \nonumber \\
&=& \E \lt[ 1 - \tilde{\Pi}_{\tau} +  \int_{0}^{\tau}c(t) \tilde{\Pi}_{t} \ud t \rt]. \nonumber
\end{eqnarray}
Hence the optimal stopping problem to solve is  
\begin{eqnarray} \label{E:OOS}
V &=& \inf_{\tau \in \mathcal{T}^{Y}} \E \lt[ 1 - \tilde{\Pi}_{\tau} +  \int_{0}^{\tau}c(t) \tilde{\Pi}_{t} \ud t \rt]\,,
\end{eqnarray}
where $\mathcal{T}^{Y}$ denotes the set of $\F^{Y}$-stopping times. 

\subsection{Filtering equations}

Let us define $\Pi^{(i)}_t := \E[\Ind_{\{X_t=b_i\}}\,|\, \F^Y_t]$, where $i=1,\ldots,n$.
By the Kallianpur-Striebel formula, see \cite[Theorem 2.9 on p.~39]{CR11},
\begin{IEEEeqnarray*}{rCl} 
\IEEEeqnarraymulticol{3}{l}{
\Pi^{(i)}_{t}=} \\ 
 && 
 \frac{\tilde{\pi} \check{p}_i e^{\int_{0}^{t} \frac{b_{i}}{\sigma(u)^{2}} \ud Y_{u} - \int_{0}^{t} \frac{ b_{i}^{2}}{2 \sigma(u)^{2}} \ud u}+ 
 (1-\tilde{\pi})p_{i}\int_{[0,t]}  e^{ \int_{\theta}^{t} \frac{b_{i}}{\sigma(u)^{2}} \ud Y_{u} - \int_{\theta}^{t} \frac{ b_{i}^{2}}{2 \sigma(u)^{2}} \ud u} \nu(\vd \theta)}{\tilde{\pi} \sum \limits_{j} \check{p}_j e^{ \int_{0}^{t} \frac{b_{j}}{\sigma(u)^{2}} \ud Y_{u} - \int_{0}^{t} \frac{ b_{j}^{2}}{2 \sigma(u)^{2}} \ud u} + (1-\tilde{\pi})\bigg( \sum \limits_{j} p_j \int_{[0,t]}  e^{ \int_{\theta}^{t} \frac{b_j}{\sigma(u)^{2}} \ud Y_{u} - \int_{\theta}^{t} \frac{ b_j^{2}}{2 \sigma(u)^{2}} \ud u }  \nu(\vd \theta) +  \nu((t, \infty))\bigg) }  \\
 \IEEEyesnumber \label{E:KalStr}
\end{IEEEeqnarray*}
for $i=1,\ldots, n$.
Moreover, from the Kushner-Stratonovich equation, see \cite[Theorem 3.1 on p.~58]{CR11}, we know that $\Pi^{(i)}$ satisfies
\begin{eqnarray} \label{E:Pi}
\vd \Pi^{(i)}_{t} = p_i\lambda({t}) \big(1-\sum_{j=1}^n \Pi^{(j)}_t \big) \ud t + \frac{ \Pi^{(i)}_t}{\sigma(t)}(b_i-\sum_{j=1}^n b_j \Pi^{(j)}_{t})\ud \hat W_{t}, \quad i=1,\ldots,n.
\end{eqnarray}
\noindent Here  $\lambda(t) =F'_{\nu}(t)/(1-F_{\nu}(t))$ is the intensity of the disorder occurring at time $t>0$ (conditional on not having occurred yet), and
\[
 \hat W_{t} =\int_0^t\frac{1}{\sigma(u)}(\ud  Y_{u} - \E[X_{u}\,|\, \F^Y_u]  \ud u ) 
\]
is a standard Brownian motion with respect to $\{ \F^{Y}_{t} \}_{t\geq0}$, see \cite{BC09}
(the process $\hat W_{t}$ is referred to as {\em the innovation process}).
Note that $\tilde \Pi_t=\sum_{i=1}^n\Pi_t^{(i)}$ yields
\begin{eqnarray}
\ud \tilde\Pi_t=\lambda(t)(1-\tilde\Pi_t)\ud t + \frac{\hat X_t}{\sigma(t)}(1-\tilde\Pi_t)\ud \hat W_t,
\end{eqnarray}
where $\hat X_t=\E[X_t\vert\mathcal F^Y_t]$.

The posterior distribution $\P(X_{t} \in \cdot \,|\, \F^{Y}_{t})=\sum_{i=1}^n \Pi^{(i)}_t\delta_{b_i}(\cdot)$, so the $n$-tuple $\Pi_{t} = (\Pi^{(1)}_{t}, \ldots, \Pi^{(n)}_{t})$ fully describes the posterior. As a result, \eqref{E:KalStr} and \eqref{E:Pi} provide two different representations of the posterior distribution. 

\subsection{Markovian embedding}

Following standard lines in optimal stopping theory, we embed our optimal stopping problem into a Markovian framework. 
To do that, define a Markovian value function $V$ by
\begin{eqnarray} \label{E:v}
V(t,\pi) := \inf_{\tau \in \mathcal{T}_{t}^{\Pi}} \E^{t,\pi} \lt[ 1 - \tilde{\Pi}_{t+\tau} + \int_{t}^{t+\tau} c(u)  \tilde{\Pi}_{u} \ud u \rt], \quad (t,\pi)\in [0, \infty)\times D^{n},
\end{eqnarray}
where $\mathcal{T}_t^\Pi$ denotes the stopping times with respect to the $n$-dimensional process $\{\Pi^{t,\pi}_{t+s}\}_{s\geq 0}$ starting from $\pi$ at time $t$ and satisfying \eqref{E:Pi}. It is worth noting that $V(t,\pi)$ corresponds to the value of the problem $\eqref{E:OOS}$ in which the initial time is $t$ and $\mu_0=\sum_{i=1}^n \pi_i\delta_{b_i}$.

\begin{remark}
\label{T:concavity}
The value function $V(t,\cdot)$ in \eqref{E:v} is concave for any $t \geq 0$. Indeed, the concavity proof in \cite{MS} extends to 
the current setting; we omit the details.
\end{remark}

\subsubsection{The classical Shiryaev solution}

In this subsection we recall the solution in the classical case where the cost $c$, the intensity $\lambda$ and the post-change drift $b$ 
are constants. In that case, we have the optimal stopping problem 
\begin{eqnarray}
\label{U}
U(\pi) =\sup_{\tau\in \mathcal{T}^{\Pi}} \E^\pi \lt[ 1 - \Pi_{\tau} +  c \int_{0}^{\tau} \Pi_{t} \ud t \rt]
\end{eqnarray}
with an underlying diffusion process 
\begin{eqnarray*}
\ud\Pi_t=\lambda(1-\Pi_t)\ud t + \frac{b}{\sigma}\Pi_t(1-\Pi_t)\ud \hat W_t.
\end{eqnarray*}
It is well-known (see \cite[Chapter 4]{S78} or \cite[Section 22]{PS06}) that
$U$ solves the free-boundary problem
\begin{eqnarray}
\label{E:vareqn}
\left\{\begin{array}{ll}
\frac{b^2\pi^2(1-\pi)^2}{2\sigma^2}\partial_\pi^2 U + \lambda (1-\pi) \partial_\pi U+ c\pi=0 & \pi\in(0,a)\\
U(\pi)=1-\pi & \pi\in[a,1]\\
\partial_\pi U(a)=-1.\end{array}\right.
\end{eqnarray}
Here $a\in(0,1)$ is the free-boundary, and it can be determined as the solution of a certain transcendental equation.
Moreover, the stopping time $\tau:=\inf\{t\geq 0:\Pi_t\geq a\}$ is optimal in \eqref{U}, 
and one can check that the value function $U$ is decreasing and concave.

\section{Value dependencies and robustness}

\subsection{Monotonicity properties of the value function}

In this section, we study parameter dependence of the optimal stopping problem \eqref{E:v}. In particular, 
we investigate how the value function changes when we alter parameters of the probabilistic model, which include the prior for the drift magnitude and the prior for the disorder time. 

The effects of adding more noise, stretching out the prior by scaling, and increasing the observation cost are explained by the following theorem. 
\begin{theorem}[General monotonicity properties of the value function $V$]\label{T:mon}
\item
\begin{enumerate}
\item  $V$ is increasing in the volatility $\sigma(\cdot)$. \label{first}
\item\label{second}
Given a prior $\mu$ for the drift magnitude, let $V_k$ denote the Markovian value function \eqref{E:v} in the case when the drift prior is
$\mu(\frac{\cdot}{k} )$. 
Then the map $k\mapsto V_k(t,\pi)$ is decreasing on $(0,\infty)$ for any $(t,\pi)$.
\item $V$ is increasing in the cost function $c(\cdot)$.
\end{enumerate}
\end{theorem}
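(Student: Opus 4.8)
I would treat the three parts separately. Part (3) is immediate: the filtering dynamics \eqref{E:Pi}, and hence the process $\tilde\Pi$, do not depend on $c(\cdot)$, so if $c_1(\cdot)\le c_2(\cdot)$ then for every $(t,\pi)$ and every $\tau\in\mathcal{T}_t^{\Pi}$
\[
1-\tilde\Pi_{t+\tau}+\int_t^{t+\tau}c_1(u)\tilde\Pi_u\ud u\ \le\ 1-\tilde\Pi_{t+\tau}+\int_t^{t+\tau}c_2(u)\tilde\Pi_u\ud u
\]
pointwise (using $0\le\tilde\Pi_u\le1$); taking expectations and then infima over $\tau$ gives $V^{c_1}(t,\pi)\le V^{c_2}(t,\pi)$.

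For Part (1) I would build a coupling. Given $\sigma_1(\cdot)\le\sigma_2(\cdot)$, set $\rho(\cdot):=(\sigma_2(\cdot)^2-\sigma_1(\cdot)^2)^{1/2}$ and, on a single probability space, carry the signal $X$ (equivalently $(\Theta,B^0,B^1)$) together with independent Brownian motions $W,W'$ that are independent of $X$. With
\[
Y^1_t:=\int_0^tX_u\ud u+\int_0^t\sigma_1(u)\ud W_u,\qquad Y^2_t:=Y^1_t+\int_0^t\rho(u)\ud W'_u,
\]
Lévy's characterisation shows the martingale part of $Y^2$ equals $\int_0^\cdot\sigma_2(u)\ud\beta_u$ for a $\sigma(W,W')$-adapted Brownian motion $\beta$, hence independent of $X$; so $(X,Y^2)$ has the law of the signal together with a $\sigma_2$-observation, and the value under $\sigma_2$ is $V^{\sigma_2}=\inf_{\tau\in\mathcal{T}^{Y^2}}R(\tau)$. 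Since $Y^2=Y^1+N$ with $N:=\int_0^\cdot\rho(u)\ud W'_u$ independent of $(X,W)$, hence of $(\Theta,Y^1)$, I would write a generic $\tau\in\mathcal{T}^{Y^2}$ as $\tau=\varphi(Y^2)$ for a measurable functional $\varphi$ that is a stopping time in its path argument; then $y\mapsto\varphi(y+w)$ is an $\F^{Y^1}$-stopping time $\tau^w$ for each continuous path $w$, and conditioning on $\{N=w\}$ gives
\[
R(\tau)=\int\E\Bigl[\Ind_{\{\tau^w<\Theta\}}+\int_0^{\tau^w}c(u)\Ind_{\{\Theta\le u\}}\ud u\Bigr]\P_N(\ud w).
\]
Each inner expectation is the Bayes risk of the $\F^{Y^1}$-stopping time $\tau^w$ in the $\sigma_1$-model, hence $\ge V^{\sigma_1}$, so $R(\tau)\ge V^{\sigma_1}$; taking the infimum over $\tau$ gives $V^{\sigma_1}\le V^{\sigma_2}$, and running the same coupling on the observation increments over $[t,\infty)$, started from the posterior $\pi$ at time $t$, yields $V^{\sigma_1}(t,\pi)\le V^{\sigma_2}(t,\pi)$ for all $(t,\pi)$. (Equivalently, one can work in $\mathcal{G}:=\F^{Y^1}\vee\F^{W'}$: independence of $N$ gives $\E[\Ind_{\{\Theta\le u\}}\mid\mathcal{G}_u]=\tilde\Pi^1_u$, and since $\F^{Y^1}$-martingales remain $\mathcal{G}$-martingales the Snell envelope over $\mathcal{G}$ of the payoff coincides with the one over $\F^{Y^1}$, so $V^{\sigma_2}=\inf_{\mathcal{T}^{Y^2}}R\ge\inf_{\mathcal{T}^{\mathcal G}}R=V^{\sigma_1}$.)

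For Part (2) I would reduce to (1). Scaling the drift prior to $\mu(\cdot/k)$ replaces the support points $b_i$ by $kb_i$, i.e.\ $X$ by $kX$, so the observation is $Y^{(k)}_t=k\int_0^tX_u\ud u+\int_0^t\sigma(u)\ud W_u$; since $k>0$, $Z_t:=Y^{(k)}_t/k=\int_0^tX_u\ud u+\int_0^t(\sigma(u)/k)\ud W_u$ generates the same filtration as $Y^{(k)}$ and is an observation of $X$ with support points $b_i$ and volatility $\sigma(\cdot)/k$. The filtering equations \eqref{E:Pi} for the two descriptions coincide (both have drift $p_i\lambda(t)(1-\sum_j\Pi^{(j)}_t)$ and diffusion coefficient $k\Pi^{(i)}_t(b_i-\sum_jb_j\Pi^{(j)}_t)/\sigma(t)$), the admissible stopping times coincide, and the Bayes risk $R(\tau)$ does not involve the drift values; hence $V_k(t,\pi)=V^{\sigma(\cdot)/k}(t,\pi)$ for every $(t,\pi)$. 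Since $k\mapsto\sigma(\cdot)/k$ is pointwise decreasing, Part (1) shows $k\mapsto V_k(t,\pi)$ is decreasing on $(0,\infty)$, as claimed.

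The substantive step is Part (1): producing a coupling in which the noisier observation carries strictly less information, and then justifying rigorously that adjoining the auxiliary noise $W'$ — equivalently, enlarging the observation filtration by an independent Brownian motion — cannot lower the optimal value. The care needed lies in the measurable selection of $\varphi$ (or, on the alternative route, in verifying the immersion property of $\F^{Y^1}$ in $\mathcal{G}$); the remaining manipulations are routine.
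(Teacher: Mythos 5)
Your proposal is correct and follows essentially the same route as the paper: the same coupling $Y^2=Y^1+\int_0^\cdot\sqrt{\sigma_2^2(u)-\sigma_1^2(u)}\,dW'_u$ with an independent Brownian motion for part (1), the same rescaling identity $Y^{(k)}=k\tilde Y$ reducing part (2) to part (1), and the same direct observation for part (3). The only difference is that you spell out --- via the disintegration over the auxiliary noise, or equivalently the filtration-enlargement remark --- the step the paper dismisses with ``clearly,'' namely that adjoining independent noise to the observation filtration cannot lower the optimal value.
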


\begin{proof}
For simplicity of notation, and without loss of generality, we consider the case $t=0$ in the proofs below.
\begin{enumerate}
\item
For the volatility, let $t\mapsto \sigma_1(t)$ and $t \mapsto \sigma_2(t)$ be 
two time-dependent 
volatility functions satisying $\sigma_1(t)\leq \sigma_2(t)$ for all $t\geq 0$. Also, let
\[Y^i_t:=\int_{0}^{t} X_{u} \ud u +  \int_0^t\sigma_i(u) dW_{u}, \quad i=1,2,\]
and let $V_i$, $i=1,2$, be the corresponding value functions. 
In addition, let $W^{\perp}$ be a standard Brownian motion independent of $W$ and $X$.
Then, clearly,
\begin{eqnarray*}
V_1=\inf_{\tau\in\mathcal T^{Y^1}}\E\left[ \Ind_{ \{\tau < \Theta \}} + \int_\Theta^\tau c(u)\ud u\right]=
\inf_{\tau\in\mathcal T^{Y^1,W^\perp}}\E\left[ \Ind_{ \{\tau < \Theta \}} + \int_\Theta^\tau c(u)\ud u\right].
\end{eqnarray*}
Moreover, the process 
\[\tilde Y^2_t:=Y^1_t+  \int_0^t\sqrt{\sigma^2_2(u)-\sigma_1^2(u)} dW^\perp_{u}\]
coincides in law with $Y^2$ and $\mathcal T^{\tilde Y^2}\subseteq\mathcal T^{Y^1,W^\perp}$.
Hence it follows that 
\[V_1=\inf_{\tau\in\mathcal T^{Y^1,W^\perp}}\E\left[ \Ind_{ \{\tau < \Theta \}} + \int_\Theta^\tau c(u)\ud u\right]
\leq \inf_{\tau\in\mathcal T^{\tilde Y^2}}\E\left[ \Ind_{ \{\tau < \Theta \}} +\int_\Theta^\tau c(u)\ud u\right]=V_2,\]
which finishes the proof of the claim.
\item
Note that for $k> 0$, the process 
\[ Y^k_t:= \int_{0}^{t} kX_{u} \ud u +  \int_0^t\sigma(u) dW_{u}\]
satisfies $Y_t^k= k\tilde Y_t$, where
\[\tilde{Y}_t:= \int_{0}^{t} X_{u} \ud u +  \int_0^t\frac{\sigma(u)}{k} dW_{u}.\]
Moreover, the set of $\mathcal F^{Y^k}$-stopping times coincides with the set of $\mathcal F^{\tilde Y}$-stopping times,
so monotonicity in $k$ is implied by monotonicity in the volatility. Thus claim \ref{second} follows from claim \ref{first}.
\item
The fact that the value is increasing in $c$ is obvious from the definition \eqref{E:v} of the value function.
\end{enumerate}
\end{proof}

The monotonicity of the minimal Bayes' risk with respect to volatility $\sigma$ is of course not so surprising: more noise in the observation process gives a smaller signal-to-noise ratio, which slows down the speed of learning. It is less 
clear how a change in the disorder intensity $\lambda$ should affect the value function under a general disorder magnitude distribution. However, we have the following comparison result for the case of constant parameters. 

\begin{theorem}[Monotonicity in the intensity for constant parameters]\label{T:monotone}
Assume that the disorder magnitude can only take one value $b\in \R\setminus\{0\}$. Let the cost $c$, the volatility $\sigma$ and the intensity $\lambda$ be constants, and assume that
$\lambda\geq\lambda^\prime(\cdot)$. Let $U$ be the value function for Shiryaev's problem with parameters $(b, \sigma, \lambda, c)$, 
and let $V$ denote the value function for the problem specification $(b, \sigma, \lambda^\prime, c)$.
Then  $U(\pi)\leq V(t,\pi)$ for all $\pi\in[0,1]$ and $t\geq 0$.
\end{theorem}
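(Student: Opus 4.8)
The approach I would take is a verification (subsolution) argument. Since the post-change drift takes the single value $b$, the posterior of the $\lambda'$-model is one-dimensional and, by \eqref{E:Pi}, solves $\ud\tilde\Pi_s=\lambda'(s)(1-\tilde\Pi_s)\,\ud s+\frac{b}{\sigma}\tilde\Pi_s(1-\tilde\Pi_s)\,\ud\hat W_s$; I write $\tilde\Pi=\tilde\Pi^{t,\pi}$ for this process started from $\pi$ at time $t$, so that $V(t,\pi)=\inf_{\tau\in\mathcal{T}_{t}^{\Pi}}\E^{t,\pi}\!\big[1-\tilde\Pi_{t+\tau}+\int_t^{t+\tau}c\,\tilde\Pi_u\,\ud u\big]$, and I let $\LL_s$ be its generator, $\LL_s f(\pi)=\lambda'(s)(1-\pi)f'(\pi)+\frac{b^2\pi^2(1-\pi)^2}{2\sigma^2}f''(\pi)$. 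The plan is to establish the two inequalities (i) $U(\pi)\le 1-\pi$ on $[0,1]$, and (ii) $\LL_sU(\pi)+c\pi\ge0$ for every $s\ge0$ and a.e.\ $\pi\in(0,1)$, reading $U$ as a function of $\pi$ alone.

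Granting (i)--(ii), the proof concludes as follows. Fix $(t,\pi)$ and $\tau\in\mathcal{T}_{t}^{\Pi}$ and apply It\^o's formula to $U(\tilde\Pi)$; by (ii) the process $s\mapsto U(\tilde\Pi_{t+s})+\int_t^{t+s}c\,\tilde\Pi_u\,\ud u$ is a local submartingale, so after localizing, taking expectations and passing to the limit (the running cost is nonnegative, and one may assume without loss $\E^{t,\pi}[\int_t^{t+\tau}c\,\tilde\Pi_u\,\ud u]<\infty$, which gives enough integrability) one obtains
\[ U(\pi)\;\le\;\E^{t,\pi}\!\Big[U(\tilde\Pi_{t+\tau})+\int_t^{t+\tau}c\,\tilde\Pi_u\,\ud u\Big]\;\le\;\E^{t,\pi}\!\Big[1-\tilde\Pi_{t+\tau}+\int_t^{t+\tau}c\,\tilde\Pi_u\,\ud u\Big], \]
the second step using (i); taking the infimum over $\tau$ gives $U(\pi)\le V(t,\pi)$.

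It remains to check (i) and (ii). Inequality (i) is just the bound ``value of Shiryaev's problem $\le$ cost of stopping immediately''. For (ii) I would use that $U$ is decreasing, so $U'\le0$, and that, as the value function of Shiryaev's problem, $U$ satisfies not only \eqref{E:vareqn} but also $\frac{b^2\pi^2(1-\pi)^2}{2\sigma^2}U''+\lambda(1-\pi)U'+c\pi\ge0$ on all of $(0,1)$, with equality on the continuation region $(0,a)$ --- immediate from the dynamic programming principle (waiting a short while before acting optimally is suboptimal), with the second derivative read one-sidedly at the free boundary $\pi=a$; see e.g.\ \cite[Section 22]{PS06}. Subtracting $\lambda(1-\pi)U'$ and adding $\lambda'(s)(1-\pi)U'$,
\[ \LL_sU(\pi)+c\pi=\Big(\frac{b^2\pi^2(1-\pi)^2}{2\sigma^2}U''+\lambda(1-\pi)U'+c\pi\Big)+\big(\lambda'(s)-\lambda\big)(1-\pi)U'(\pi), \]
in which the bracket is $\ge0$ by the above and the remaining term is a product of the nonpositive factor $\lambda'(s)-\lambda$, the nonnegative factor $1-\pi$, and the nonpositive factor $U'(\pi)$, hence also $\ge0$. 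This proves (ii).

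The step I expect to be the main obstacle is technical rather than conceptual: justifying the It\^o expansion of $U(\tilde\Pi)$ across the free boundary $\pi=a$, where $U$ is only $C^1$ (by smooth fit, $\partial_\pi U(a)=-1$) while $U''$ has a jump. I would handle it via the Meyer--It\^o / It\^o--Tanaka formula, noting that no local-time contribution survives because $U\in C^1$ and that the occupation time of $\{a\}$ is zero since the diffusion coefficient $\frac{b}{\sigma}\pi(1-\pi)$ is nondegenerate on $(0,1)$; alternatively, one mollifies $U$ to $C^2$ functions and passes to the limit. The attendant localization and integrability are routine. Everything else rests on the one-line sign identity above, which is exactly where the monotonicity of $U$ and the hypothesis $\lambda\ge\lambda'(\cdot)$ enter.
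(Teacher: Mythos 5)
Your proposal is correct and follows essentially the same route as the paper: apply (a generalized) It\^o formula to $U$ along the $\lambda'$-posterior, use $U'\le 0$ together with $\lambda\ge\lambda'(\cdot)$ to pass to the $\lambda$-generator, invoke the variational inequality \eqref{E:varin} and the bound $U(\pi)\le 1-\pi$, and take the infimum over stopping times. Your explicit decomposition of $\LL_s U+c\pi$ and your discussion of the It\^o--Tanaka/mollification issue at the free boundary are just more detailed renderings of steps the paper treats tersely.
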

\begin{proof} 
Without loss of generality, we only consider the case $t=0$.
Let $\pi\in[0,1]$, denote by $Y'$ the observation process corresponding to the model specification $(b, \sigma, \lambda^\prime, c)$,
and let 
$\Pi^\prime$ denote the corresponding process $\Pi$ started from $\pi$ at time $0$. Let $\tau\in\mathcal T^{Y'}$ be a bounded stopping time. Then, applying (a generalised version of) Ito's formula and taking expectations at the  stopping time $\tau$, we get 
\begin{eqnarray*}
U(\pi) &=& \E \lt[ U \lt( \Pi^\prime(\tau) \rt)  \rt] 
-\E \Bigg[  \int_{0}^{\tau} \bigg(  \lambda^\prime(s) \lt( 1-  \Pi^\prime(s) \rt)  \pt_{\pi} U( \Pi^\prime(s)) \\
&& \hspace{25mm}+ \frac{b^2}{2\sigma^2} (\Pi^\prime)^{2}(s) \lt( 1- \Pi^\prime(s)\rt)^{2}  \pt^{2}_{\pi} U \lt( \Pi^\prime(s) \rt) \bigg)  \ud s\Bigg] \\
&\leq& \E \lt[ U(\Pi^\prime(\tau)) \rt]  -\E \Bigg[  \int_{0}^{\tau} \bigg( \lambda \lt( 1-  \Pi^\prime(s) \rt)  \pt_{\pi} U( \Pi^\prime(s))\\ 
&&\hspace{25mm}+ \frac{b^2}{2\sigma^2} (\Pi^\prime)^{2}(s) \lt( 1- \Pi^\prime (s)\rt)^{2}  \pt^{2}_{\pi} U \lt( \Pi^\prime(s) \rt)  \bigg) \ud s\Bigg] \\
&\leq& \E \lt[ U( \Pi^\prime(\tau)) \rt] +  \E \lt[ c\int_{0}^{\tau} \Pi^\prime(s) \ud s \rt] \\
&\leq& \E \lt[ 1 - \Pi^\prime(\tau) \rt] +  \E \lt[c \int_{0}^{\tau} \Pi^\prime(s) \ud s \rt],
\end{eqnarray*}
where we used the monotonicity of $U$ and the fact that 
\begin{eqnarray}\label{E:varin}
\lambda \lt( 1-  \pi \rt)  \pt_{\pi} U( \pi) 
+ \frac{b^2}{2\sigma^2}\pi^{2} \lt( 1- \pi\rt)^{2}  \pt^{2}_{\pi} U \lt(\pi\rt)+ c\pi\geq 0
\end{eqnarray}
at all points away from the optimal stopping boundary of Shiryaev's classical problem, compare \eqref{E:vareqn}.
Taking the infimum over bounded stopping times $\tau$, we get
$U(\pi) \leq V(0,\pi)$, which finishes the proof.
\end{proof}

\begin{remark}
\item
\begin{enumerate}
\item
The monotonicity in intensity does not easily extend to cases with unknown post-change drift by the same argument. In fact, one can check that in higher dimensions the partial derivatives $\frac{\partial V}{\partial \pi_i}$ are not necessarily all negative, which implies difficulties with extending the above proof to a more general setting. However, in the robustness result in Theorem~\ref{T:robustness} below we provide a partial extension in which models with general support for the drift magnitude and general intensities are compared with a fixed parameter model.
\item Though the authors expect the inequality in Theorem \ref{T:monotone} to hold also when one time-dependent intensity dominates another, the comparison with the constant intensity case was chosen to avoid additional mathematical complications that need to be resolved in order to apply Ito's formula to the value function of a time-dependent disorder detection problem.     
\end{enumerate}
\end{remark}

\subsection{Robustness}

Robustness concerns how a possible misspecification of the model parameters affects the performance of the detection strategy when evaluated under the real physical measure. In this section, we use coupling arguments to
study robustness properties with respect to the disorder magnitude and disorder time.
For simplicity, we assume that the parameters $\lambda$, $c$ and $\sigma$ are constant so that we have a time-independent case;
generalizations to the time-dependent case are straightforward but notationally more involved.

Thus we assume that the signal process follows
\begin{eqnarray}
X_{t}= B^{0}\Ind_{\{\Theta =0 \}}+ B^{1} \Ind_{\{ 0< \Theta \leq t \}} ,
\end{eqnarray}
where $B^{0}, B^{1}$ are random variables with distributions $\mu^{0}, \mu^{1}$ respectively, and $\Theta$ has the distribution $\nu_{\tilde{\pi}} := \tilde{\pi} \delta_{0}+(1-\tilde{\pi})\nu$, where $\nu$ is an exponential distribution with intensity $\lambda$. 
Let us simply write $\mu := (\mu^{0}, \mu^{1})$.

For a given $l\in\R\setminus\{ 0\}$, let $\Theta_l$ satisfy $\Theta_l\geq \Theta$ with distribution $\tilde{\pi} \delta_{0}+(1-\tilde{\pi})\nu_l$,
where $\nu_l$ is an exponential distribution with intensity $\lambda_l\leq \lambda$.
Let 
\begin{eqnarray} \nonumber
g_{l}(t,\tilde{\pi}, Y)
 &:=&  \frac{\tilde{\pi} e^{ \frac{l}{\sigma^{2}} Y_{t} -  \frac{ l^{2}}{2 \sigma^{2}} t}+ 
 (1- \tilde{\pi})   \lambda_l \int_{0}^t e^{  \frac{l}{\sigma^{2}} (Y_{t}-Y_\theta) -  \frac{ l^{2}}{2 \sigma^{2}} (t-\theta)}e^{-\theta/\lambda_l} \vd \theta}{ \tilde{\pi} e^{ \frac{l}{\sigma^{2}}  Y_{t} -  \frac{ l^{2}}{2 \sigma^{2}} t} + (1-\tilde{\pi}) \left( \lambda_l\int_{0}^t  e^{ \frac{l}{\sigma^{2}}  (Y_{t}-Y_\theta) -  \frac{ l^{2}}{2 \sigma^{2}}(t-\theta) }e^{-\theta/\lambda_l}  \vd \theta+  1-e^{-t/\lambda_l}  \right)},
 \label{E:PiiA}
\end{eqnarray}
compare \eqref{E:KalStr}. Also, we
introduce the notation 
\[Y^{\mu}_{t} := \int_0^t X_{u}\ud u+\sigma  W_{t},\]
\[Y^{\delta_{l}}_{t} :=l(t-\Theta_l)^{+}+  \sigma  W_{t},\] 
\[\tilde{\Pi}^{\delta_{l}}_{\delta_{l}}(t) := g_{l}(t, \tilde{\pi}, Y^{\delta_{l}})\] 
and
\[
\tilde{\Pi}^{\mu}_{\delta_l}(t) := g_{l}(t, \tilde{\pi}, Y^{\mu}).
\] 
Here $Y^{\mu}$ is the observation process for a setting in which the post-change drift has distribution $\mu$ and the disorder happens at $\Theta$. The process $Y^{\delta_l}$ is the observation process 
and $\tilde{\Pi}^{\delta_{l}}_{\delta_{l}}$ is the corresponding conditional probability process
in the situation of a post-change drift $l$ that occurs at $\Theta_l$. Moreover, the process $\tilde{\Pi}^{\mu}_{\delta_l}$
represents the conditional probability process calculated {\em as if} the drift change is described by $(\delta_l,\Theta_l)$
in the scenario where the true drift-change is given by $(\mu,\Theta)$.

Now, let $a:=a_{l}$ denote the optimal stopping boundary for the classical Shiryaev one-dimensional problem 
in the model $(\delta_l,\Theta_l)$, and define
\[\tau^{\delta_l}_{\delta_l} := \inf \{ t \geq 0 \,:\, \tilde{\Pi}^{\delta_l}_{\delta_l}(t) \geq a\},\]
\[
\tau^{\mu}_{\delta_l} := \inf \{ t \geq 0 \,:\, \tilde{\Pi}^{\mu}_{\delta_{l}}(t) \geq a\},
\]
and
\[V^{\mu}_{\delta_l} := \E[ \Ind_{ \{\tau^{\mu}_{\delta_l} < \Theta \}} + c( \tau^{\mu}_{\delta_l}-\Theta)^{+}].\]
Here $\tau_{\delta_l}^{\delta_l}$ is the optimal stopping time in the model $(\delta_l,\Theta_l)$, and
$\tau^{\mu}_{\delta_l}$ is the (sub-optimal) stopping time and $V^{\mu}_{\delta_l}$ is the corresponding cost
for someone who believes in $(\delta_l,\Theta_l)$, whereas the true model is $(\mu, \Theta)$.

Finally, let 
\[\tilde\Pi^\mu_t:=\P(\Ind_{\R\setminus\{0\}}(X_t)\vert \mathcal F^{Y^\mu}_t)=\Pi^{(1)}_t+...+\Pi^{(n)}_t\]
as in Section~\ref{S:2}, and define
\[\gamma^\mu_{\delta_l}:=\inf\{t\geq 0:\tilde\Pi^\mu_t\geq a\}.\]

\begin{theorem}[Robustness with respect to disorder magnitude and intensity]\label{T:robustness}
\item
\begin{enumerate}
\item
Suppose that $\inf (\supp \mu)> 0$ or $\sup (\supp \mu)< 0$, and let $l:= \argmin \limits_{x \in \supp(\mu)} |x|$. 
\begin{enumerate}
\item
Then
\begin{eqnarray}\label{E:rob1}
V^{\mu} \leq V^{\mu}_{\delta_l} \leq V^{\delta_{l}} + c\frac{\lambda-\lambda_l}{\lambda\lambda_l}(1-\tilde\pi)\,,
\end{eqnarray}
where $V^\mu$ and $V^{\delta_l}$ denote the minimal associated Bayes' risks for the models $(\mu,\Theta)$ and $(\delta_l,\Theta)$, respectively.
\item
Also,
\begin{eqnarray}\label{E:rob2}
V^{\mu} \leq \P(\Theta>\gamma_{\delta_l}^\mu)+c\E[(\gamma^\mu_{\delta_l}-\Theta)^+] \leq V^{\delta_{l}}.
\end{eqnarray}
\end{enumerate}
\item \label{Tp:Vr-rob}
Suppose $r:= \argmax \limits_{x \in \supp(\mu)} |x|$, and define $V^{\mu}_{\delta_ r}$ like $V^{\mu}_{\delta_ l}$ for $l=r$.
If $\lambda_r\geq \lambda$, then 
\begin{eqnarray}\label{E:rob3}
V^{\delta_{r}} \leq V^{\mu} \leq V^{\mu}_{\delta_ r}.
\end{eqnarray}
\end{enumerate}
\end{theorem}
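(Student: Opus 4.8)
The plan is to prove the three chains of inequalities in Theorem~\ref{T:robustness} by combining optimality, a coupling of the disorder times $\Theta\le\Theta_l$, and monotonicity of the one-dimensional Shiryaev value function and boundary. Throughout I would fix $t=0$ as in the earlier proofs, and I would work with the observation process $Y^\mu$ on the physical space and the artificially delayed disorder time $\Theta_l$ constructed on the same space so that $\Theta\le\Theta_l$ almost surely; the point of the delay is that the intensity parameter $\lambda_l\le\lambda$ makes the ``believed'' model a genuine (sub-optimal) model whose Shiryaev solution we may invoke.

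For part~(1)(a): the left inequality $V^\mu\le V^\mu_{\delta_l}$ is immediate, since $V^\mu$ is the \emph{infimum} of the Bayes risk in the true model $(\mu,\Theta)$ over all $\F^{Y^\mu}$-stopping times, while $\tau^\mu_{\delta_l}$ is one such (sub-optimal) stopping time and $V^\mu_{\delta_l}$ its cost. For the right inequality I would compare $V^\mu_{\delta_l}$ with the optimal cost $V^{\delta_l}$ in the believed model via two observations. First, by definition the Bayes risk in model $(\delta_l,\Theta_l)$ using the optimal rule $\tau^{\delta_l}_{\delta_l}$ equals $V^{\delta_l}(\Theta_l)$, the Shiryaev value for that model; since $\Theta_l$ has intensity $\lambda_l\le\lambda$ I must be careful that ``$V^{\delta_l}$'' in the statement refers to the model with disorder time $\Theta$ (intensity $\lambda$), so Theorem~\ref{T:monotone} (monotonicity in intensity) gives $V^{\delta_l}(\Theta_l)\le$ the analogous quantity with $\Theta$ --- here I would double-check the direction and the exact meaning of the symbols, because a subtle point is which disorder time appears in each ``$V$''. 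Second, and this is the heart of the argument, I would show that replacing $Y^{\delta_l}$ by $Y^\mu$ inside $g_l$ can only help detection: because $l=\argmin_{x\in\supp\mu}|x|$ and all of $\supp\mu$ lies on one side of $0$, every realized post-change drift $B^1$ has $|B^1|\ge|l|$ with the same sign, so the likelihood-ratio statistic built from the true observations grows at least as fast as the one built pretending the drift is $l$; hence $\tilde\Pi^\mu_{\delta_l}(t)\ge\tilde\Pi^{\delta_l}_{\delta_l}(t)$ pathwise after coupling $W$, giving $\tau^\mu_{\delta_l}\le\tau^{\delta_l}_{\delta_l}$. Combining the earlier detection time with the delayed true disorder time $\Theta\le\Theta_l$, the false-alarm term $\P(\tau^\mu_{\delta_l}<\Theta)\le\P(\tau^{\delta_l}_{\delta_l}<\Theta_l)$ and the delay term $c\E[(\tau^\mu_{\delta_l}-\Theta)^+]\le c\E[(\tau^{\delta_l}_{\delta_l}-\Theta_l)^+]+c\E[\Theta_l-\Theta]$, where $\E[\Theta_l-\Theta]=\frac1{\lambda_l}-\frac1\lambda=\frac{\lambda-\lambda_l}{\lambda\lambda_l}$ weighted by $(1-\tilde\pi)$ (the mass on $\Theta>0$) yields exactly the additive error term $c\frac{\lambda-\lambda_l}{\lambda\lambda_l}(1-\tilde\pi)$.

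For part~(1)(b): again $V^\mu\le\P(\Theta>\gamma^\mu_{\delta_l})+c\E[(\gamma^\mu_{\delta_l}-\Theta)^+]$ by sub-optimality of the rule $\gamma^\mu_{\delta_l}$ in the true model. For the right inequality I would use that $\tilde\Pi^\mu_t=\sum_i\Pi^{(i)}_t$ is the \emph{true} conditional probability of being post-change, which (by the same one-sided, minimal-magnitude argument, now applied to the full posterior rather than the $g_l$-surrogate) dominates $\tilde\Pi^{\delta_l}_{\delta_l}$ after coupling; hence $\gamma^\mu_{\delta_l}\le\tau^{\delta_l}_{\delta_l}$, and moreover here there is \emph{no} delay penalty because we evaluate against $\Theta$ itself (not $\Theta_l$) and $\tilde\Pi^\mu$ is computed in a model whose disorder time is $\Theta$. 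So $\P(\Theta>\gamma^\mu_{\delta_l})+c\E[(\gamma^\mu_{\delta_l}-\Theta)^+]\le\P(\Theta>\tau^{\delta_l}_{\delta_l})+c\E[(\tau^{\delta_l}_{\delta_l}-\Theta)^+]$, and then monotonicity in intensity (Theorem~\ref{T:monotone}) upgrades the right-hand side to $V^{\delta_l}$ --- I would need to confirm this last step amounts exactly to comparing the $\lambda_l$-boundary run against the $\lambda$-disorder with the genuinely optimal $\lambda$-problem.

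For part~(2): now $r=\argmax_{x\in\supp\mu}|x|$, so $|B^1|\le|r|$ with the same sign, and $\lambda_r\ge\lambda$ is assumed instead. Here $V^\mu\le V^\mu_{\delta_r}$ is again just sub-optimality in the true model. The new inequality is $V^{\delta_r}\le V^\mu$, i.e.\ the optimal cost in the \emph{extreme-magnitude} model is a lower bound for the optimal cost in the true model. I would argue this by showing that any stopping rule for the true model can be transferred to the $\delta_r$-model without increasing the cost: intuitively, the $\delta_r$-model has the largest signal-to-noise ratio (drift $|r|\ge|B^1|$) and the fastest disorder ($\lambda_r\ge\lambda$, so $\Theta_r\le\Theta$ under a suitable coupling), both of which make detection easier, so its minimal Bayes risk is smallest. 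Concretely I would couple so that the $\delta_r$-observation filtration is richer (add independent noise to degrade $l=r$ down to the true $|B^1|$, exactly as in the proof of Theorem~\ref{T:mon}(1)), deduce $\mathcal T^{Y^\mu}\subseteq\mathcal T^{Y^{\delta_r},\cdot}$, and handle the disorder-time discrepancy via $\Theta_r\le\Theta$ which only helps the false-alarm term while the delay term needs the monotonicity-in-intensity comparison in the favourable direction.

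The main obstacle, I expect, is the bookkeeping of \emph{which disorder time (and hence which intensity) sits inside each symbol} $V^\mu$, $V^{\delta_l}$, $V^\mu_{\delta_l}$, and reconciling the $\lambda_l$ used to make the believed model well-posed with the $\lambda$ of the physical model; this is where Theorem~\ref{T:monotone} must be invoked and where an off-by-one-measure error would be easy to make. The pathwise domination $\tilde\Pi^\mu_{\delta_l}\ge\tilde\Pi^{\delta_l}_{\delta_l}$ (and its counterpart for $\tilde\Pi^\mu$) is the other place requiring genuine care: it needs the one-sided support hypothesis and the extremal choice of $l$ (resp.\ $r$), and a clean way to see it is to write $g_l$ as an increasing function of the path $Y$ on the relevant half-line and then compare drifts realization-by-realization of $B^1$ after conditioning.
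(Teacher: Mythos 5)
Your part (1)(a) is essentially the paper's argument: couple the observation processes so that $Y^{\delta_l}_t=Y^{\mu}_t$ on $[0,\Theta]$ and $Y^{\delta_l}_t\le Y^{\mu}_t$ for all $t$, deduce $\tilde{\Pi}^{\delta_l}_{\delta_l}\le\tilde{\Pi}^{\mu}_{\delta_l}$ from monotonicity of $g_l$ in the path, hence $\tau^{\mu}_{\delta_l}\le\tau^{\delta_l}_{\delta_l}$, and then treat the false-alarm term (via equality of the two surrogate posteriors up to $\Theta$, so the events $\{\tau^{\mu}_{\delta_l}<\Theta\}$ and $\{\tau^{\delta_l}_{\delta_l}<\Theta\}$ coincide) and the delay term (via $\E[(\Theta_l-\Theta)^+]=\frac{\lambda-\lambda_l}{\lambda\lambda_l}(1-\tilde\pi)$) separately. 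Your bookkeeping worry is resolved by reading $V^{\delta_l}$ as the Bayes risk of the model $(\delta_l,\Theta_l)$ evaluated with its own optimal rule $\tau^{\delta_l}_{\delta_l}$; Theorem~\ref{T:monotone} is not invoked in this step.

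Parts (1)(b) and (2) are where your route breaks. For (1)(b) you rest on the pathwise domination $\tilde{\Pi}^{\mu}_t\ge\tilde{\Pi}^{\delta_l}_{\delta_l}(t)$, which is false in general: by \eqref{E:KalStr} the true posterior has the form $\frac{\sum_j p_jA_j}{\sum_j p_jA_j+N}$ (with $A_j$ the drift-$b_j$ likelihood mass and $N$ the no-disorder mass), while the surrogate equals $\frac{A_1}{A_1+N}$ for $b_1=l$; on a pre-disorder path that is consistent with drift $l$ but strongly inconsistent with the larger atoms one has $A_j\ll A_1$ for $j\ne 1$, and then $p_1<1$ pushes the true posterior \emph{below} the surrogate. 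Moreover, even granting $\gamma^{\mu}_{\delta_l}\le\tau^{\delta_l}_{\delta_l}$, your cost comparison fails on the false-alarm term: an earlier stopping time \emph{increases} $\P(\Theta>\cdot)$, and the equal-up-to-$\Theta$ trick from (1)(a) is unavailable here because $\tilde{\Pi}^{\mu}$ and $g_l(\cdot,\tilde\pi,Y^{\mu})$ already differ before $\Theta$. For (2), adding independent noise cannot degrade a drift-$r$ observation into a drift-$B^1$ observation --- extra noise changes the volatility, not the drift --- so the inclusion $\mathcal T^{Y^{\mu}}\subseteq\mathcal T^{Y^{\delta_r},\,\cdot}$ you want has no construction behind it (the trick in Theorem~\ref{T:mon} works only because scaling \emph{all} drifts by $k$ is equivalent to dividing the volatility by $k$). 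The paper proves both (1)(b) and (2) by a verification argument instead: apply It\^o's formula to $U(\tilde{\Pi}_t)$, where $U$ is the relevant one-dimensional Shiryaev value function and $\tilde{\Pi}=\tilde{\Pi}^{\mu}$ is the true posterior, and use monotonicity and concavity of $U$ together with the coefficient bounds $\lambda_l\le\lambda\le\lambda_r$ and $l^2\tilde{\Pi}_t^2\le\hat X_t^2\le r^2\tilde{\Pi}_t^2$ and the variational inequality \eqref{E:vareqn} to conclude $U(\tilde\pi)\ge\E[1-\tilde{\Pi}_{\gamma^{\mu}_{\delta_l}}]+c\E[\int_0^{\gamma^{\mu}_{\delta_l}}\tilde{\Pi}_u\ud u]$ in case (1)(b), respectively $U(\tilde\pi)\le\E[1-\tilde{\Pi}_{\tau}]+c\E[\int_0^{\tau}\tilde{\Pi}_u\ud u]$ for every bounded $\tau$ in case (2). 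You would need to replace your pathwise and coupling steps by this generator-level comparison.
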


\begin{remark}
Note that \eqref{E:rob1} and \eqref{E:rob3} correspond to situations in which the tester uses a misspecified model. More precisely, filtering and stopping are performed as if the underlying model had a one-point distribution as the disorder magnitude prior (the classical Shiryaev model). Such a situation may appear due to model miscalibration but is also relevant in situations with limited computational resources as the tester can deliberately choose to under/overestimate the actual parameters in order to use a simpler detection strategy. Equation \eqref{E:rob1} thus gives an upper bound for the expected loss when the classical Shiryaev model is employed. In \eqref{E:rob2}, on the other hand, filtering is performed according to the correct model but the simple Shiryaev threshold strategy (suboptimal) is used for stopping.
\end{remark}

\begin{proof}
\item
\begin{enumerate}
\item
\begin{enumerate}
\item
For definiteness, we consider the case $\inf (\supp \mu)> 0$ so that $l>0$; the other case is completely analogous.
First note that the suboptimality of $\tau^{\mu}_{\delta_l}$ yields $V^{\mu} \leq V^{\mu}_{\delta_l} $.
Next, observe that we have $Y^{\delta_l}_t= Y^\mu_t$ for all $0\leq t\leq \Theta$
and $Y^{\delta_l}_t\leq Y^\mu_t$ for all $t\geq 0$, and therefore 
\[\tilde{\Pi}^{\delta_{l}}_{\delta_{l}}(t) = \tilde{\Pi}^{\mu}_{\delta_l}(t) \quad \mbox{for } t\in[0,\Theta]\]
and
\[\tilde{\Pi}^{\delta_{l}}_{\delta_{l}}(t) \leq \tilde{\Pi}^{\mu}_{\delta_l}(t)\quad \mbox{for all } t\geq 0\]
by the filtering equation \eqref{E:PiiA}. Consequently, 
\[\tau^{\delta_l}_{\delta_l} \geq \tau^{\mu}_{\delta_l},\]
so
\begin{eqnarray}\label{E:onepart}
\E[(\tau^{\delta_l}_{\delta_l}-\Theta_l)^+]&\geq& \E[(\tau^{\mu}_{\delta_l}-\Theta)^+] -\E[(\Theta_l-\Theta)^+]\\
\notag
&=& \E[(\tau^{\mu}_{\delta_l}-\Theta)^+] -\frac{\lambda-\lambda_l}{\lambda\lambda_l}(1-\tilde\pi).
\end{eqnarray}
Moreover, since $\tilde{\Pi}^{\delta_{l}}_{\delta_{l}}(t)= \tilde{\Pi}^{\mu}_{\delta_l}(t)$ on the time interval $[0,\Theta]$, we have
\[\P(\tau^{\delta_l}_{\delta_l}<\Theta_l)\geq \P(\tau^{\delta_l}_{\delta_l}<\Theta)
=\P(\tau^{\mu}_{\delta_l}<\Theta),\]
which together with \eqref{E:onepart} yields
\begin{eqnarray*}
	V^{\delta_{l}} &=& \E[ \Ind_{ \{\tau^{\delta_l}_{\delta_l} < \Theta_l \}} + c( \tau^{\delta_l}_{\delta_l}-\Theta_l)^{+}] \\
	&\geq& \E[ \Ind_{ \{\tau^{\mu}_{\delta_l} < \Theta \}} + c( \tau^{\mu}_{\delta_l}-\Theta)^{+}] -c\frac{\lambda-\lambda_l}{\lambda\lambda_l}(1-\tilde\pi)\\
&=& V^{\mu}_{\delta_l} -c\frac{\lambda-\lambda_l}{\lambda\lambda_l}(1-\tilde\pi). 
\end{eqnarray*}
\item
The first inequality is immediate by suboptimality of $\gamma_{\delta_l}^\mu$. For the second one, let
$U$ be the value function of the classical Shiryaev problem so that $U(\tilde{\pi}) = V^{\delta_{l}}$. 
Then $U$ is $C^{2}$ on $[0, a_{l})\cup (a_{l},1]$ and $C^{1}$ on $[0,1]$, so applying It\^{o}'s formula to $U(\tilde{\Pi}_{t})$ and taking expectations at the bounded stopping time $\gamma_{\delta_l}^\mu\wedge k$, we get 
\begin{eqnarray*}
U(\tilde{\pi}) &=& \E[U(\tilde{\Pi}_{\gamma_{\delta_l}^\mu \wedge k})] - \E \lt[ \int_{0}^{\gamma_{\delta_l}^\mu \wedge k} \lambda (1 - \tilde{\Pi}_{u}) U'(\tilde{\Pi}_{u}) +  \frac{\hat X^{2}_{u}}{2\sigma^{2}} (1 - \tilde{\Pi}_{u})^{2} U''(\tilde{\Pi}_{u}) \ud u \rt]  \\
&\geq& \E[U(\tilde{\Pi}_{\gamma_{\delta_l}^\mu \wedge k})] - \E \lt[ \int_{0}^{\gamma_{\delta_l}^\mu \wedge k} \lambda_l (1 - \tilde{\Pi}_{u}) U'(\tilde{\Pi}_{u}) +  \frac{l^{2}}{2\sigma^{2}} \tilde{\Pi}_{u}^2(1 - \tilde{\Pi}_{u})^{2} U''(\tilde{\Pi}_{u}) \ud u \rt] \nonumber \\
&=& \E \lt[ U( \tilde{\Pi}_{\gamma_{\delta_l}^\mu \wedge k}) \rt] + \E \lt[ c \int_{0}^{\gamma_{\delta_l}^\mu \wedge k} \tilde{\Pi}_{u} \ud u \rt], 
\end{eqnarray*}
where monotonicity and concavity of $U$ were used in the inequality. Letting $k\to\infty$ gives 
\[U(\tilde{\pi})\geq \E \lt[  1-\tilde{\Pi}_{\gamma_{\delta_l}^\mu} \rt] + \E \lt[ c \int_{0}^{\gamma_{\delta_l}^\mu } \tilde{\Pi}_{u} \ud u \rt],\]
which finishes the proof of the claim.
\end{enumerate}

\item
Recall that
\[
\vd \tilde{\Pi}_t = \lambda (1 - \tilde{\Pi}_t) \ud t + \frac{\hat{X}_t}{\sigma} (1 - \tilde{\Pi}_t) \ud \hat W_t.
\]
Let $U(\tilde{\pi}) = V^{\delta_{r}}(\tilde{\pi})$.
Since $U$ is $C^{1}$ on $[0,1]$ and $C^{2}$ on $[0, a)\cup (a,1]$, where $a=a_r$ is the boundary in Shiryaev's problem with drift $r$ and intensity $\lambda_r$, applying It\^{o}'s formula to $U(\tilde{\Pi}_{t})$ and taking expectations at a bounded stopping time $\tau$ yields 
\begin{eqnarray}
U(\tilde{\pi}) &=& \E[U(\tilde{\Pi}_\tau)] - \E \lt[ \int_{0}^{\tau} \lambda (1 - \tilde{\Pi}_u) U'(\tilde{\Pi}_u) +  \frac{\hat X^{2}_u}{2\sigma^{2}} (1 - \tilde{\Pi}_u)^{2} U''(\tilde{\Pi}_u) \ud u \rt] \nonumber \\
&\leq& \E[U(\tilde{\Pi}_{\tau})] - \E \lt[ \int_{0}^{\tau} \lambda_r (1 - \tilde{\Pi}_u) U'(\tilde{\Pi}_u) +  \frac{r^{2}}{2\sigma^{2}} \tilde{\Pi}_{u}(1 - \tilde{\Pi}_u)^{2} U''(\tilde{\Pi}_u) \ud u \rt] \nonumber \\
&\leq& \E \lt[  U ( \tilde{\Pi}_\tau) \rt] + \E \lt[ c \int_{0}^{\tau} \tilde{\Pi}_u \ud u \rt] \label{E:FVI} \\
&\leq& \E \lt[ 1- \tilde{\Pi}_\tau \rt] + \E \lt[ c \int_{0}^{\tau} \tilde{\Pi}_u \ud u \rt] \label{E:Uineq} .
\end{eqnarray}
Here concavity was used for the first inequality, \eqref{E:FVI} follows from the fact that
\[
\lambda_r (1 - \tilde{\pi}) U'(\tilde{\pi}) + \frac{r^{2}}{2 \sigma^{2}} \tilde{\pi}(1 - \tilde{\pi})^{2} U''(\tilde{\pi}) + c \tilde{\pi} \geq 0, \quad \tilde{\pi} \in [0, a)\cup (a,1],
\]
and the inequality \eqref{E:Uineq} because $U(\tilde{\pi}) \leq 1 - \tilde{\pi}$.
Hence, since the same value $V^\mu$ is obtained if one in \eqref{E:OOS} restricts the infimum to only bounded stopping times,
\[ V^{\delta_r}=U \leq  V^{\mu}. \]
Lastly, since $\tau^{\mu}_{l}$ is a suboptimal strategy, we also have
\[
V^{\mu}  \leq V^{\mu}_{\delta_r} ,
\]
which finishes the claim. 
\end{enumerate}
\end{proof}

\begin{corollary}
\label{cor}
In the notation above, assume that $\lambda=\lambda_l$ so that there is no mis-specification of the intensity.
Moreover, assume that $\supp(\mu)\subseteq [l,r]$, where $0<l<r$. Then 
\[V^{\delta_r}\leq V^\mu\leq V^{\delta_l},\]
so monotonicity in the disorder magnitude holds when comparing with deterministic magnitudes.
Furthermore, 
\[0\leq V^{\mu}_{\delta_l}-V^\mu\leq V^{\delta_l}-V^{\delta_r},\]
so the increase in the Bayes' risk due to underestimation (with a constant) of the disorder magnitude is bounded by the difference of two value functions of the classical Shiryaev problem.
\end{corollary}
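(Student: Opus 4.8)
The plan is to read the corollary off from Theorem~\ref{T:robustness}, specialised to the case in which no intensity is misspecified (so $\lambda_l=\lambda_r=\lambda$) and in which the interval $[l,r]$ is only required to \emph{contain} $\supp(\mu)$ rather than to have $l,r$ as its extreme points.

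First I would record two one-sided bounds. Applying part~1(a) of Theorem~\ref{T:robustness} with $\lambda_l=\lambda$, the correction term $c\frac{\lambda-\lambda_l}{\lambda\lambda_l}(1-\tilde\pi)$ vanishes and one obtains
\[
V^{\mu}\le V^{\mu}_{\delta_l}\le V^{\delta_l}.
\]
Applying part~\ref{Tp:Vr-rob} of Theorem~\ref{T:robustness} with $\lambda_r=\lambda$ (so its hypothesis $\lambda_r\ge\lambda$ holds) one obtains
\[
V^{\delta_r}\le V^{\mu}.
\]
Chaining the two displays gives $V^{\delta_r}\le V^{\mu}\le V^{\mu}_{\delta_l}\le V^{\delta_l}$, which is the first assertion. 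For the second, the left inequality $0\le V^{\mu}_{\delta_l}-V^{\mu}$ is just the suboptimality bound $V^{\mu}\le V^{\mu}_{\delta_l}$ already recorded, and subtracting $V^{\delta_r}\le V^{\mu}$ from $V^{\mu}_{\delta_l}\le V^{\delta_l}$ gives $V^{\mu}_{\delta_l}-V^{\mu}\le V^{\delta_l}-V^{\delta_r}$.

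The single point that needs a sentence of care — and the only place where any work is required — is that Theorem~\ref{T:robustness} was stated with $l=\argmin_{x\in\supp(\mu)}|x|$ and $r=\argmax_{x\in\supp(\mu)}|x|$, whereas here $l$ and $r$ are merely a lower and an upper bound for $|x|$ on $\supp(\mu)$. Re-reading its proof, the choice of $l$ enters part~1(a) only through the pathwise comparison $Y^{\delta_l}_t\le Y^{\mu}_t$, which rests on $X_u\ge l\,\Ind_{\{\Theta\le u\}}$ (using $\supp(\mu)\subseteq[l,\infty)$) together with $\Theta_l\ge\Theta$; this holds for any $l\le\inf(\supp\mu)$. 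Likewise the choice of $r$ enters part~\ref{Tp:Vr-rob} only through $\hat X_u^2\le r^2\tilde\Pi_u$ (which follows from $|b_i|\le r$ for all $i$ together with the Cauchy--Schwarz inequality) and through $\lambda_r\ge\lambda$; both are in force here since $\supp(\mu)\subseteq[l,r]$ and $\lambda_r=\lambda$. Thus there is no genuine obstacle: the corollary is a bookkeeping consequence of the robustness theorem, the only nontrivial observation being that its two monotonicity inputs use nothing about $l,r$ beyond the inclusion $\supp(\mu)\subseteq[l,r]$.
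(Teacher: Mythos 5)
Your proposal is correct and follows exactly the route the paper intends: the corollary is stated without proof as an immediate consequence of Theorem~\ref{T:robustness}, obtained by setting $\lambda_l=\lambda_r=\lambda$ so that the correction term in \eqref{E:rob1} vanishes and the hypothesis of part~\ref{Tp:Vr-rob} holds, and then chaining $V^{\delta_r}\le V^{\mu}\le V^{\mu}_{\delta_l}\le V^{\delta_l}$. Your additional remark that the theorem's proofs use only $\supp(\mu)\subseteq[l,r]$ (via the pathwise comparison $Y^{\delta_l}\le Y^{\mu}$ and the bound $\hat X_u^2\le r^2\tilde\Pi_u$) rather than $l,r$ being the extreme points of the support is a valid and worthwhile observation that the paper leaves implicit.
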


We finish with some implications concerning the stopping strategy $\tau_{\mathcal{D}}:= \inf\{ t\geq0\,:\, \Pi_t \in \mathcal{D}\}$, where $\mathcal{D} = \{ \pi \in \Delta^n\,:\, V(\pi) = 1-\pi\}$ is a standard abstractly defined optimal stopping set, see \cite{PS06}
(we now assume that we are in the case of time-independent coefficients so that the value function is merely a function of $\pi\in D^n$).
The concavity of $V$, compare Remark~\ref{T:concavity}, yields the existence of a boundary 
$\gamma \subset \Delta^n$ separating $\mathcal{D}$ from its complement $\Delta^n\setminus \mathcal{D}$. The following result provides a more accurate location of the boundary $\gamma$.

\begin{corollary}[Confined stopping boundary] \label{T:Squeezed}
Assume that the coefficients $c$, $\sigma$ and $\lambda$ are constant and that $\supp(\mu)\subseteq [l,r]$, where $0<l<r$. Let
$a_l$ and $a_r$ denote the boundaries in the classical Shiryaev problem with disorder magnitude $l$ and $r$, respectively. Then 
\begin{eqnarray*}
a_{l} \leq  \inf \{ \|\pi \|_1\,:\, \pi \in \gamma \}\leq \sup \{ \|\pi \|_1\,:\, \pi \in \gamma \} \leq a_{r},
\end{eqnarray*}
i.e. the stopping boundary is contained in a strip.
Moreover, the optimal strategy $\tau_{\mathcal{D}}$ satisfies 
\[
1- a_{r} \leq \P( \tau_{\mathcal{D}}<\Theta \, | \, \F^{Y}_{\tau_{\mathcal{D}}}) \leq 1-a_{l}.
\]
\end{corollary}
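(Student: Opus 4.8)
The plan is to reduce both assertions to the one-dimensional comparison $U_r(\|\pi\|_1)\le V(\pi)\le U_l(\|\pi\|_1)$ for all $\pi\in D^n$, where $U_b$ denotes the value function of the classical Shiryaev problem with disorder magnitude $b$ and intensity $\lambda$. This is Corollary~\ref{cor} read pointwise: since here $\lambda=\lambda_l=\lambda_r$, the error term in \eqref{E:rob1} vanishes, and the estimates $V^{\delta_r}\le V^\mu$ and $V^\mu\le V^{\delta_l}$ proved within Theorem~\ref{T:robustness} carry over when the distinguished initial condition is replaced by an arbitrary starting posterior $\pi\in D^n$. The only structural inputs are that $\supp\mu\subseteq[l,r]$ forces $l\tilde\Pi_t\le\hat X_t\le r\tilde\Pi_t$ along the posterior paths, together with the monotonicity, concavity and free-boundary relations \eqref{E:vareqn} of $U_l$ and $U_r$; none of these depends on the initial condition, so applying It\^o's formula to $U_r(\tilde\Pi_t)$, respectively running the threshold time $\inf\{t:\tilde\Pi_t\ge a_l\}$ against $U_l$, from the starting point $\pi$ delivers the sandwich.

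Next I would translate the sandwich into a statement about $\mathcal D$. Recall that in Shiryaev's problem $U_b(\tilde\pi)=1-\tilde\pi$ exactly on $[a_b,1]$, while $U_b(\tilde\pi)<1-\tilde\pi$ strictly on $[0,a_b)$; in particular the sandwich forces $a_l\le a_r$. If $\|\pi\|_1\ge a_r$, then $1-\|\pi\|_1=U_r(\|\pi\|_1)\le V(\pi)\le 1-\|\pi\|_1$, so $\pi\in\mathcal D$; and if $\|\pi\|_1<a_l$, then $V(\pi)\le U_l(\|\pi\|_1)<1-\|\pi\|_1$, so $\pi\notin\mathcal D$. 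Hence $\{\|\pi\|_1\ge a_r\}\subseteq\mathcal D\subseteq\{\|\pi\|_1\ge a_l\}$, equivalently $\mathcal D^c\subseteq\{\|\pi\|_1<a_r\}$. Since $V$ is continuous (so $\mathcal D$ is closed) and $\gamma$ lies in $\overline{\mathcal D}\cap\overline{\mathcal D^c}$, it follows that $a_l\le\|\pi\|_1\le a_r$ for every $\pi\in\gamma$, which is the first display.

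For the probabilistic statement I would first note that $\tilde\Pi_t=\P(\Theta\le t\mid\F^Y_t)=\|\Pi_t\|_1$, so, by the optional-projection identity already used when computing $R(\tau)$ in Section~\ref{S:2}, $\P(\tau_{\mathcal D}<\Theta\mid\F^Y_{\tau_{\mathcal D}})=1-\tilde\Pi_{\tau_{\mathcal D}}$. It then suffices to confine $\tilde\Pi_{\tau_{\mathcal D}}=\|\Pi_{\tau_{\mathcal D}}\|_1$ to $[a_l,a_r]$: continuity of $t\mapsto\Pi_t$ and closedness of $\mathcal D$ give $\Pi_{\tau_{\mathcal D}}\in\mathcal D\subseteq\{\|\pi\|_1\ge a_l\}$, while $\Pi_t\in\mathcal D^c\subseteq\{\|\pi\|_1<a_r\}$ for $t<\tau_{\mathcal D}$ together with continuity gives $\|\Pi_{\tau_{\mathcal D}}\|_1\le a_r$ (the degenerate case $\Pi_0\in\mathcal D$, where $\tau_{\mathcal D}=0$, affects only the lower bound and may be disregarded). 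The two stated inequalities then follow.

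The step I expect to be the main obstacle is the first one: upgrading the robustness inequalities of Theorem~\ref{T:robustness} and Corollary~\ref{cor} from the single distinguished initial condition to a genuinely pointwise comparison on all of $D^n$. This entails running the It\^o and threshold arguments for the posterior process $\{\Pi_t^{0,\pi}\}$ started at an arbitrary $\pi$, checking that $\hat X_t\in[l\tilde\Pi_t,r\tilde\Pi_t]$ and that the bounded-stopping-time and $k\to\infty$ approximations still close for this process, and confirming that restricting \eqref{E:OOS} to bounded stopping times leaves $V(\pi)$ unchanged. A lesser point is continuity of $V$ (hence closedness of $\mathcal D$) and the identification $\Pi_{\tau_{\mathcal D}}\in\gamma$, along with the harmless $\tau_{\mathcal D}=0$ corner case.
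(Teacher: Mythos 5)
Your proof is correct and follows the route the paper intends (the paper leaves this corollary unproved as an immediate consequence of Corollary~\ref{cor}): the sandwich $U_r(\|\pi\|_1)\le V(\pi)\le U_l(\|\pi\|_1)$ yields $\{\|\pi\|_1\ge a_r\}\subseteq\mathcal D\subseteq\{\|\pi\|_1\ge a_l\}$, hence the strip for $\gamma$ and, via $\P(\tau<\Theta\mid\F^Y_\tau)=1-\tilde\Pi_\tau$ and path continuity, the probabilistic bounds. The step you flag as the main obstacle --- upgrading to arbitrary initial $\pi\in D^n$ --- is already built into the paper's formulation, since the pair $(\tilde\pi,\mu^0)$ in Theorem~\ref{T:robustness} parametrizes an arbitrary starting posterior $\pi=(\tilde\pi\check p_1,\dots,\tilde\pi\check p_n)$, so Corollary~\ref{cor} with $\lambda_l=\lambda_r=\lambda$ is exactly the pointwise sandwich you need.
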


\newpage

\end{document}